\newcolumntype{?}{!{\vrule width 1pt}}
\definecolor{color1}{RGB}{127,201,127}
\definecolor{color2}{RGB}{190,174,212}
\definecolor{color3}{RGB}{253,192,134}
\definecolor{color4}{RGB}{255,255,153}
\newtheorem{lemma}{Lemma}[section]
\newtheorem{theorem}[lemma]{Theorem}
\theoremstyle{definition}
\newtheorem{remark}[lemma]{Remark}
\newcommand{\R}{\mathbb{R}}
\renewcommand{\H}{\mathbb{H}}
\titleformat*{\section}{\normalsize\bfseries}
\titleformat*{\subsection}{\Large\bfseries}
\titleformat*{\subsubsection}{\large\bfseries}
\titleformat*{\paragraph}{\large\bfseries}
\titleformat*{\subparagraph}{\large\bfseries}
\title{Change of polytope volumes under M\"{o}bius transformations and the circumcenter of mass}
\author{Anton Izosimov\thanks{
Department of Mathematics,
University of Arizona;
e-mail: {\tt izosimov@math.arizona.edu}
} }
\date{}
\begin{document}

\tikzset{->-/.style={decoration={
  markings,
  mark=at position .7 with {\arrow{>}}},postaction={decorate}}}
  
\usetikzlibrary{angles, quotes}

\maketitle

\abstract{
The circumcenter of mass of a simplicial polytope $P$ is defined as follows: triangulate $P$, assign to each simplex its circumcenter taken with
 weight equal to the volume of the simplex, and then find the center of mass
of the resulting system of point masses. The so obtained point is independent of the triangulation.

The aim of the present note is to give a definition of the circumcenter of mass that does not rely on a triangulation. To do so we investigate how volumes of polytopes change under M\"{o}bius transformations.



}

\section{Introduction}
Recall that the center of mass of a polyhedral solid $P$ can be found as follows: triangulate $P$, assign to each simplex its centroid taken with
 weight equal to the volume of the simplex, and then find the center of mass
of the resulting system of point masses. The so obtained point is independent of the triangulation.

Remarkably, when $P$ is simplicial (i.e. all its facets are simplices), one can replace centroids of simplices in this construction by their circumcenters. The resulting point still does not depend on the triangulation and is known as the \textit{circumcenter of mass} of $P$. The author of \cite{laisant1887theorie} attributes this construction to 19th century algebraic geometer G.\,Bellavitis. In modern literature, the circumcenter of mass is studied in \cite{TT, Akopyan, tabachnikov2015remarks, Adler, myakishev2006two}.

Since the circumcenter of mass does not depend on the triangulation, one should be able to define it without using one. The aim of the present note is to provide such a definition. Specifically, we show that the {circumcenter of mass} of $P$ is related to the rate of change of volume of $P$ under M\"{o}bius transformations. 

Recall that a M\"{o}bius transformation of $\R^n$ is an isometry of the hyperbolic upper half-space $\H^{n+1}$ restricted to the boundary. Every such transformation is a finite composition of inversions in spheres and reflections in hyperplanes. For $n \geq 3$ M\"{o}bius transformations are the same as conformal transformations, while for $n = 2$ M\"{o}bius transformations are just complex fractional linear transformations $z \mapsto (az + b)/(cz + d)$.

Note that general M\"{o}bius transformations do not preserve coplanarity and hence polyhedral shapes. However, the action of M\"{o}bius transformations on \textit{simplicial} polytopes is still well-defined. By definition, a M\"{o}bius transformation $\phi \colon \R^n \to \R^n$ takes a simplex $\Delta$ with vertices $v_0, \dots, v_n$ to a simplex $\phi(\Delta)$ with vertices $\phi(v_0), \dots, \phi(v_n)$. This extends to simplicial polytopes: the image under $\phi$ of a polytope with faces $\Delta_1, \dots, \Delta_k$ is the polytope with faces $\phi(\Delta_1), \dots, \phi(\Delta_k)$.

Consider a simplicial polytope $P$. How does its volume change under infinitesimal M\"{o}bius transformations? We show that this is determined by the location of one single point of $P$, that we call the \textit{M\"{o}bius center} of $P$ and denote as $\mathrm{m}(P)$  (see Theorem \ref{mainThm}, Part 1). Specifically, \textit{the relative rate of change of volume under an infinitesimal M\"{o}bius transformation $\xi$ is equal to the divergence of $\xi$ computed at the M\"{o}bius center}:
\begin{equation}\label{mainFormula}
\nabla_\xi \log \mathrm{vol}(P) = \mathrm{div}\, \xi(\mathrm{m}(P)).
\end{equation}
Here $\nabla_\xi $ stands for the derivative in the direction $\xi$:
$
\nabla_\xi \log \mathrm{vol}(P) =\left. \frac{d}{dt} \right\vert_{t=0} \log \mathrm{vol}(\phi_t(P)),
$
where $\phi_t$ is a family of M\"{o}bius transformations integrating $\xi$.

Note that M\"{o}bius vector field are quadratic and hence have linear divergence.  So, formula \eqref{mainFormula} implies that, just like the center and circumcenter of mass, the M\"{o}bius center $\mathrm{m}(P)$ can be found by subdivision into simplices   (see Theorem \ref{mainThm}, Part 3). Furthermore, we show that for a simplex $\Delta$ its M\"{o}bius center $\mathrm{m}(\Delta)$ coincides with the circumcenter of its medial simplex $\Delta'$, i.e. the simplex whose vertices are centroids of faces of $\Delta$  (see Theorem \ref{mainThm}, Part 4). So, the M\"{o}bius center can be defined by the same construction as the circumcenter of mass, with circumcenters of simplices of the triangulation replaced by circumcenters of their medial simplices. Furthermore, there is a simple relation between the two circumcenters: for a $n$-dimensional simplex $\Delta$ one has
\begin{equation}\label{proportion}
\mathrm{cc}(\Delta') = \frac{n+1}{n}\,\mathrm{cm}(\Delta) -  \frac{1}{n}\,\mathrm{cc}(\Delta), 
\end{equation}
where $\Delta'$ is the medial simplex, $\mathrm{cm}$ stands for the center of mass, and $\mathrm{cc}$ for the circumcenter. As a result, for a general simplicial $n$-dimensional polytope $P$ one has
$$
\mathrm{m}(P) = \frac{n+1}{n}\,\mathrm{cm}(P) -  \frac{1}{n}\,\mathrm{ccm}(P), 
$$
where $\mathrm{ccm}(P)$ is the circumcenter of mass of $P$. So, since both the center of mass and the M\"{o}bius center can be defined without a triangulation, it follows that the circumcenter of mass is well-defined as well. Explicitly, one has
\begin{equation}\label{finalFla}
\mathrm{ccm}(P) = (n+1)\,\mathrm{cm}(P) -  n \cdot \mathrm{m}(P). 
\end{equation}

\begin{remark}
One may similarly ask how the volume of a polytope changes under infinitesimal \textit{projective} transformations. In that case, one has the following version of formula \eqref{mainFormula}:
$
\nabla_\xi \log \mathrm{vol}(P) = \mathrm{div}\, \xi (\mathrm{cm}(P)).
$
Indeed, since projective transformations take faces to faces, the change of volume of $P$ under an infinitesimal {projective} transformation $\xi$ can be computed as the flux of $\xi$ through  the boundary of $P$ or, equivalently, as the integral of the divergence of $\xi$ over the interior of $P$ (for simplicity assume that $P$ is convex): 

 $$
\nabla_\xi \log \mathrm{vol}(P)  = \frac{1}{ \mathrm{vol}(P) } \int_{\mbox{interior of } P} \mathrm{div}\, \xi \, dx,
$$
where $dx$ is the Euclidean volume element. 
But since the divergence of a projective vector field is a (inhomogeneous) linear function, the latter expression is precisely the value of the divergence at the center of mass, as needed.
\end{remark}
\begin{remark} Formula \eqref{finalFla} may seem unsettling as the coefficients look pretty random. There is a way to fix this by replacing M\"{o}bius  vector fields
by a different class of quadratic fields $\xi$ which in some sense interpolate between M\"{o}bius and projective fields (see Remark~\ref{rm:vfccm} below). Those vector fields have the property
$
\nabla_\xi \log \mathrm{vol}(P) = \mathrm{div}\, \xi (\mathrm{ccm}(P))
$
and thus provide a direct definition of the circumcenter of mass circumventing the notion of the M\"{o}bius center. However, the geometric meaning of such fields $\xi$ is somewhat unclear. 


\end{remark}
\begin{remark}
For a triangle, the circumcenter of the medial triangle (i.e. the M\"{o}bius center) is also known as the \textit{nine-point center}. So, analogously to the definition of the circumcenter of mass, the  M\"{o}bius center of a polygon can be thought of as the ``nine-point center of mass''.
An analogous point of the tetrahedron is the center of the so-called \textit{twelve-point sphere}, however it does not seem to have any name. Relation \eqref{proportion} is well known in those cases. In particular, for a triangle it says that the centroid lies on the line joining the circumcenter and the nine-point center, $2/3$ of the way towards the latter (the line containing all the three points is known as the \textit{Euler line}; it also contains the orthocenter). 
\end{remark}
\smallskip

{\bf Acknowledgments.} The author is grateful to Boris Khesin, Leonid Monin, Richard Schwartz, and Sergei Tabachnikov for fruitful discussions and useful remarks. This work was supported by NSF grant DMS-2008021.

\section{Precise definitions and the main result}\label{sec2}

There are many ways to formalize the notion of a (not necessarily convex) polytope. For the purposes of the present paper, a \textit{simplicial polytope} in $\R^n$ is a piecewise linear simplicial cycle of dimension $n-1$ (a particular case of this general definition is the boundary of a convex polytope all of whose facets are simplices). In particular, polytopes in $\R^n$ form an Abelian group $\mathcal P(\R^n)$ under addition. This group is generated by boundaries of oriented $n$-dimensional simplices (in what follows, we refer to those generators as just simplices). A representation of a polytope $P$ as a sum of simplices is called a \textit{triangulation}. 

The group of polytopes comes  equipped with the (algebraic) volume homomorphism $\mathrm{vol} \colon \mathcal P(\R^n) \to \R$. It is given on simplices $\Delta = (v_0, \dots, v_n)$ by
$
\mathrm{vol}(\Delta) = \frac{1}{n!} |v_1 - v_0, \dots, v_n - v_0|,
$
where $|w_1, \dots, w_n|$ stands for the determinant of the matrix $(w_1, \dots, w_n)$.

We define \textit{M\"{o}bius transformations} of $\R^n$ as isometries of the hyperbolic upper half-space $\H^{n+1}$ restricted to the boundary. We refer the reader to \cite{ahlfors1981mobius} for a detailed account of such transformations. Here we only need the corresponding Lie algebra $\mathfrak{m\ddot ob}_n$ of M\"{o}bius vector fields. In dimensions $n \geq 3$, M\"{o}bius vector fields are the same as conformal Killing vector fields. In dimension $n = 2$, they are the same as holomorphic quadratic vector fields. In any dimension, the general form of a M\"{o}bius vector field $\xi$ is
\begin{equation}\label{eqn:mob}
\dot x = Ax  + |x|^2b - 2\langle b, x \rangle x + c,
\end{equation}
where $A $ is a matrix such that $A - \lambda \mathrm{Id}$ is skew-symmetric for some $\lambda \in \R$, and $b,c \in \R^n$ are vectors. We note that the divergence of such a vector field is
\begin{equation}\label{eqn:mobdiv}
\mathrm{div}\, \xi = \mathrm{tr}\, A - 2n \langle b, x \rangle.
\end{equation}
\begin{theorem}\label{mainThm}

\begin{enumerate} \item
Let $P \in \mathcal P(\R^n)$ be a polytope in $\R^n$ with $\mathrm{vol}(P) \neq 0$. Then there exists a unique point $\mathrm{m}(P)\in\R^n$ (the \textit{M\"{o}bius center of P}) such that
\begin{equation}\label{mainFormula2}
\nabla_\xi \log \mathrm{vol}(P) = \mathrm{div}\, \xi (\mathrm{m}(P)).
\end{equation}
for any M\"{o}bius vector field $\xi \in \mathfrak{m\ddot ob}_n$.
\item For $P$ as above and any similarity transformation $\phi \colon \R^n \to \R^n$ (i.e. a composition of a homothety and isometry) one has $\mathrm{m}(\phi(P))) = \phi(\mathrm{m}(P))$.
\item For $P$ as above and any triangulation $P = \sum \Delta_i$ with $\mathrm{vol}(\Delta_i)  \neq 0$ for all $i$ one has
$$
\mathrm{m}(P) = \frac{1}{\mathrm{vol}(P)} \sum \mathrm{vol}(\Delta_i) \, \mathrm{m}(\Delta_i). 
$$
\item For a simplex $\Delta \in \mathcal P(\R^n)$ such that $ \mathrm{vol}(\Delta) \neq 0$, the M\"{o}bius center $\mathrm{m}(\Delta)$ coincides with the circumcenter of the medial simplex $\Delta'$.
 It is related to the centroid and the circumcenter of $\Delta$ by the formula
 \begin{equation}\label{mcccm}
\mathrm{cc}(\Delta') = \frac{n+1}{n}\,\mathrm{cm}(\Delta) -  \frac{1}{n}\,\mathrm{cc}(\Delta).
\end{equation}
\item For any $P \in \mathcal P(\R^n)$ with $\mathrm{vol}(P) \neq 0$ its M\"{o}bius center is related to the center of mass and the circumcenter of mass by the formula
$$
\mathrm{m}(P) = \frac{n+1}{n}\,\mathrm{cm}(P) -  \frac{1}{n}\,\mathrm{ccm}(P), 
$$
\end{enumerate}
\end{theorem}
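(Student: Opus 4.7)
My plan is to treat the five parts of the theorem in order, with Parts 1--3 and 5 being formal consequences of a single direct computation in Part 4.

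For Part 1, I would decompose a general Möbius field as $\xi = Ax + \xi_b + c$, where $A - \lambda\Id$ is skew-symmetric and $\xi_b = |x|^2 b - 2\langle b,x\rangle x$. The map $\xi \mapsto \nabla_\xi\log\mathrm{vol}(P)$ is linear in $\xi$, and translation-, rotation-, and scaling-invariance of volume immediately fix its values on the $c$-, skew-$A$-, and scalar-$A$-parts (namely $0$, $0$, and $n\lambda = \mathrm{tr}\,A$); the residual $\xi_b$-contribution is linear in $b$ and therefore of the form $-2n\langle b, \mathrm{m}(P)\rangle$ for a unique $\mathrm{m}(P)$, which is exactly the shape demanded by \eqref{eqn:mobdiv}. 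Part 2 then follows from the facts that a similarity $\phi$ preserves the Lie algebra of Möbius fields, rescales volumes only by a constant (so preserves $\nabla_\xi\log\mathrm{vol}$), and intertwines divergence via $\mathrm{div}(\phi_*\xi)(\phi(x)) = \mathrm{div}\,\xi(x)$; uniqueness from Part 1 then forces $\mathrm{m}(\phi(P)) = \phi(\mathrm{m}(P))$. Part 3 follows from additivity of volume across a triangulation: summing $\mathrm{vol}(\Delta_i)\mathrm{div}\,\xi(\mathrm{m}(\Delta_i))$ and equating to $\mathrm{vol}(P)\mathrm{div}\,\xi(\mathrm{m}(P))$, the constant $\mathrm{tr}\,A$-parts cancel and matching the $b$-dependence yields the weighted barycentric formula.

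The main work is Part 4. I would write $\mathrm{vol}(\Delta) = \tfrac{1}{n!}\det M$, where $M$ is the $(n+1)\times(n+1)$ matrix with columns $(v_i;1)$, so $\nabla_{\xi_b}\log\mathrm{vol}(\Delta) = \mathrm{tr}(M^{-1}\dot M)$. A short manipulation of $\dot M$ decomposes it into three pieces whose traces can be evaluated separately; two of them give $-2(n+1)\langle b,\mathrm{cm}(\Delta)\rangle$ and zero (the latter because $M^{-1}e_{n+1}$ is the barycentric-coordinate vector of the origin), while the third reduces to $\mathbf{s}^T \lambda^b$, where $s_i = |v_i|^2$ and $\lambda^b = M^{-1}(b;0)$ is the unique tuple with $\sum\lambda^b_i v_i = b$ and $\sum\lambda^b_i = 0$. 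The key observation --- and the main obstacle --- is to recognize $\mathbf{s}^T \lambda^b$ as the directional derivative at $b$ of the affine interpolant of $|v_i|^2$ at the vertices, and then to identify that interpolant explicitly via the circumscribed-sphere identity $|v_i|^2 = 2\langle\mathrm{cc}(\Delta),v_i\rangle + (R^2 - |\mathrm{cc}(\Delta)|^2)$. Its linear part is $2\langle\mathrm{cc}(\Delta),\cdot\rangle$, whence $\mathbf{s}^T\lambda^b = 2\langle b,\mathrm{cc}(\Delta)\rangle$, and matching the total with $-2n\langle b,\mathrm{m}(\Delta)\rangle$ yields \eqref{mcccm}.

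To conclude Part 4, I would identify this point as $\mathrm{cc}(\Delta')$ by observing that the affine map $x \mapsto \tfrac{n+1}{n}\mathrm{cm}(\Delta) - \tfrac{1}{n}x$ is the homothety centered at $\mathrm{cm}(\Delta)$ with ratio $-1/n$ that sends each vertex $v_i$ of $\Delta$ to the centroid of the opposite facet; hence it sends $\Delta$ to $\Delta'$ and, being a similarity, sends $\mathrm{cc}(\Delta)$ to $\mathrm{cc}(\Delta')$. Part 5 is then immediate: substituting Part 4 into the barycentric formula from Part 3, the weighted sum splits as $\tfrac{n+1}{n}\sum\mathrm{vol}(\Delta_i)\mathrm{cm}(\Delta_i) - \tfrac{1}{n}\sum\mathrm{vol}(\Delta_i)\mathrm{cc}(\Delta_i)$, which by the definitions of the center of mass and the circumcenter of mass equals $\tfrac{n+1}{n}\mathrm{cm}(P) - \tfrac{1}{n}\mathrm{ccm}(P)$.
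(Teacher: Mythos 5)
Your proposal is correct, and Parts 1--3 and 5 follow the paper's own route: the paper packages your decomposition $\xi = Ax + \xi_b + c$ as the exact sequence $0\to\mathfrak{iso}_n\to\mathfrak{m\ddot ob}_n\xrightarrow{\mathrm{div}}\mathfrak{l}_n\to 0$ and normalizes on homotheties, but the content (vanishing on isometries, the value $n\lambda=\mathrm{tr}\,A$ on the scaling part, linearity of the residue in $b$) is the same, as are the additivity argument for Part 3 and the substitution for Part 5.

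Part 4 is where you genuinely diverge. The paper first translates so that $v_0=0$ (so that the pure field $\xi_b=|x|^2b-2\langle b,x\rangle x$ fixes a vertex), differentiates the $n\times n$ determinant $|v_1,\dots,v_n|$ column by column, and reads off that the point $p=\sum v_i - n\,\mathrm{m}(\Delta)$ satisfies $\langle p,v_i\rangle=\tfrac12|v_i|^2$, hence $|p-v_i|^2=|p|^2$ for all $i$; this \emph{derives} that $p$ is the circumcenter rather than assuming the circumsphere in advance. It then proves $\mathrm{m}(\Delta)=\mathrm{cc}(\Delta')$ by the direct computation $|v_i'-\mathrm{m}(\Delta)|^2=\tfrac{1}{n^2}|p-v_i|^2$. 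You instead keep the vertices general, work with the bordered $(n+1)\times(n+1)$ matrix and Jacobi's formula $\mathrm{tr}(M^{-1}\dot M)$, and split $\dot M$ into three pieces; I verified that your traces come out to $\mathbf{s}^T\lambda^b=2\langle b,\mathrm{cc}(\Delta)\rangle$ (via the circumsphere identity $|v_i|^2=2\langle\mathrm{cc}(\Delta),v_i\rangle+\mathrm{const}$ and $\sum\lambda_i^b=0$, $\sum\lambda_i^b v_i=b$), $-2(n+1)\langle b,\mathrm{cm}(\Delta)\rangle$, and $0$ (since $M^{-1}e_{n+1}$ gives barycentric coordinates of the origin), which matched against $-2n\langle b,\mathrm{m}(\Delta)\rangle$ yields \eqref{mcccm}; the identification of the right-hand side as $\mathrm{cc}(\Delta')$ via the medial homothety of ratio $-1/n$ is also sound. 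Your version buys you freedom from the normalization $v_0=0$ and produces the affine relation \eqref{mcccm} in one stroke, at the cost of presupposing the existence of the circumscribed sphere (standard for a nondegenerate simplex, so not a gap) and slightly heavier linear algebra; the paper's version is more elementary and self-contained, re-deriving the circumcenter equations and the equidistance from the medial vertices from scratch.
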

\begin{proof}
1. Consider the subalgebra $\mathfrak{iso}_n \subset \mathfrak{m\ddot ob}_n$ of infinitesimal isometries. It consists of vector fields of the form~\eqref{eqn:mob} with $b = 0$ and $A$ skew-symmetric. We have the following sequence of linear maps:
$$
0 \to \mathfrak{iso}_n \xrightarrow{i} \mathfrak{m\ddot ob}_n \xrightarrow{\mathrm{div}} \mathfrak{l}_n \to 0
$$
where $i \colon  \mathfrak{iso}_n \to \mathfrak{m\ddot ob}_n$ is the inclusion mapping, and $\mathfrak{l}_n$ is the $(n+1)$-dimensional vector space of (inhomogeneous) linear functions on the affine space $\R^n$. Since every divergence-free M\"{o}bius vector field is an isometry (preserving angles and volume implies preserving the metric), and every linear function can be obtained as the divergence of a M\"{o}bius vector field (which follows from transitivity of action of isometries on linear functions and can also be seen from explicit expression \eqref{eqn:mobdiv}), this sequence is exact. Therefore, any linear function on $\mathfrak{m\ddot ob}_n$ which vanishes on  $\mathfrak{iso}_n$ is of the form $f(\mathrm{div}\, \xi)$, where $f \in  \mathfrak{l}_n^*$ (here  $\mathfrak{l}_n^*$ is the dual space of $ \mathfrak{l}_n$). This in particular applies to the function
$
\xi \mapsto \nabla_\xi \log \mathrm{vol}(P) 
$
(which vanishes on isometries since isometries preserve the volume). So, there is $f \in  \mathfrak{l}_n^*$ such that
$$
\nabla_\xi \log \mathrm{vol}(P) = f(\mathrm{div}\,\xi).
$$
for all $\xi \in \mathfrak{m\ddot ob}_n$. Further observe that for $\xi$ of the form $\dot x = \lambda x$ (i.e. a homothety) one has
$
\nabla_\xi \log \mathrm{vol}(P) = n \lambda$ and $\mathrm{div}\,\xi = n \lambda.
$
Therefore, for any constant function $c \in  \mathfrak{l}_n$ one has $f(c) = c$. But any linear function $f \colon \mathfrak{l}_n \to \R$ which takes every constant to itself is of the form $f(l) = l(x)$ for some $x \in \R^n$. Furthermore, such $x$ is clearly unique, since evaluation at different points gives different functions on $\mathfrak{l}_n$  Denoting that $x$ by $\mathrm{m}(P)$, we get the result.
\\ \\
2. This follows from the invariance of all involved objects under similarities. 
\\ \\
3. This follows from additivity of the volume function and linearity of $\mathrm{div}\, \xi$ for a M\"{o}bius vector field $\xi \in  \mathfrak{m\ddot ob}_n$:
\begin{gather}
\nabla_\xi \log \mathrm{vol}(\Delta_i) = \mathrm{div}\, \xi (\mathrm{m}(\Delta_i)) \quad \Rightarrow \quad \nabla_\xi  \mathrm{vol}(\Delta_i) = \mathrm{vol}(\Delta_i) \,\mathrm{div}\, \xi (\mathrm{m}(\Delta_i))\\
 \Rightarrow \quad \nabla_\xi  \mathrm{vol}(P) = \sum    \nabla_\xi  \mathrm{vol}(\Delta_i)  = \sum  \mathrm{vol}(\Delta_i) \,\mathrm{div}\, \xi (\mathrm{m}(\Delta_i)) \\ \Rightarrow \quad    \nabla_\xi  \ln \mathrm{vol}(P)  = \sum  \frac{\mathrm{vol}(\Delta_i)}{\mathrm{vol}(P) } \,\mathrm{div}\, \xi (\mathrm{m}(\Delta_i)) = \mathrm{div}\, \xi \left(\sum  \frac{\mathrm{vol}(\Delta_i)}{\mathrm{vol}(P) } \, \mathrm{m}(\Delta_i)\right).
\end{gather}
\\ \\
4. Let $\Delta = (v_0, \dots, v_n) \in \mathcal P(\R^n)$ be a simplex such that $ \mathrm{vol}(\Delta) \neq 0$, and let \begin{equation}\label{pdef} p: =  -n\cdot \mathrm{m}(\Delta) +  \sum_{i=0}^n v_i.\end{equation} We will first show that $p$ is the circumcenter of $\Delta$. In view of Part 2 of the theorem, it suffices to consider the case $v_0 = 0$. Let  $\xi \in \mathfrak{m\ddot ob}_n$ be of the form
$$
\dot x =  |x|^2b - 2\langle b, x \rangle x,
$$
and let $\phi_t$ be a family of M\"{o}bius transformations integrating $\xi$. Note that since $\xi$ vanishes at the origin, we have $\phi_t(v_0) = \phi_t(0) = 0$. Therefore,
\begin{gather}
\nabla_\xi \log \mathrm{vol}(\Delta) = \left. \frac{d}{dt} \right\vert_{t=0} \!\!\!\!\!\!\!\log |\phi_t( v_1), \dots, \phi_t(v_n)|  = \frac{1}{D} \sum_{i=1}^n |v_1, \dots, |v_i|^2b - 2\langle b, v_i \rangle v_i, \dots, v_n| 
=  \ell(b) - 2  \langle b, \sum_{i=1}^n v_i \rangle,
\end{gather}
where
\begin{equation}\label{ldef}
D:= |v_1, \dots, v_n|, \quad \ell(b) := \frac{1}{D} \sum_{i=1}^n |v_1, \dots, b , \dots, v_n||v_i|^2.
\end{equation}
On the other hand, we have $\nabla_\xi \log \mathrm{vol}(\Delta) = \mathrm{div}\, \xi (\mathrm{m}(\Delta)) $ and  $\mathrm{div}\, \xi = - 2n \langle b, x \rangle$, so by \eqref{mainFormula2} we have
\begin{equation}\label{mcimpl}
 -2n\langle b, \mathrm{m}(\Delta) \rangle =  \ell(b) -2\langle b,  \sum_{i=1}^n v_i \rangle
 \end{equation}
for any $b \in \R^n$, which, by \eqref{pdef}, is equivalent to
$
 \langle b, p \rangle = \frac{1}{2} \ell(b).
 $
In particular, by definition \eqref{ldef} of the function $\ell$ we get
 $$
 \langle p, v_i \rangle =  \frac{1}{2} \ell(v_i) = \frac{1}{2}\langle v_i, v_i \rangle.
 $$
 Therefore,
\begin{equation}\label{weq}
 |p - v_i|^2 = \langle p, p \rangle - 2 \langle p, v_i \rangle +  \langle v_i, v_i \rangle =  \langle p, p \rangle = |p - v_0|^2
 \end{equation}
 for all $i$. So indeed $p$ is the circumcenter of $\Delta$, as claimed.
 
 Now, let us show that  $\mathrm{m}(\Delta)$ is the circumcenter of the medial simplex of $\Delta$. In view of \eqref{pdef} and an already established fact that  $p$ is the circumcenter of $\Delta$, this also proves~\eqref{mcccm}.  
%
Let
$$
  v'_i := \frac{1}{n} (-v_i +  \sum_{j=0}^n v_j )
$$
be centroids of faces of $\Delta$ (equivalently, vertices of the medial simplex $\Delta'$). Then 
$$
|v_i' - \mathrm{m}(\Delta)|^2 =  \frac{1}{n^2}|-v_i +  \sum_{j=0}^n v_j - n \cdot \mathrm{m}(\Delta)|^2 =  \frac{1}{n^2}|p- v_i|^2.
$$
By \eqref{weq}, the latter quantity is independent of $i$, so $\mathrm{m}(\Delta)$ is equidistant from the points $v_0', \dots, v_n'$, as desired. 
\\ \\
5. This directly follows from the two previous statements.
\end{proof}

\begin{remark}\label{rm:vfccm}
Along the same lines one shows that for quadratic vector fields $\xi$ of the form

\begin{equation}\label{eqn:mob2}
\dot x = Ax + |x|^2b + c
\end{equation}
one has
$
\nabla_\xi \log \mathrm{vol}(P) = \mathrm{div}\, \xi (\mathrm{ccm}(P)).
$
This gives a direct definition of the circumcenter of mass bypassing the notion of the M\"{o}bius center. We chose not to pursue this approach since geometric interpretation of vector fields~\eqref{eqn:mob2} is unclear. In a certain sense such vector fields interpolate between  M\"{o}bius fields \eqref{eqn:mob} and projective fields
$
\dot x = Ax + \langle b, x \rangle x + c,
$
just like the circumcenter of mass ``interpolates'' between the center of mass and the M\"{o}bius center.

\end{remark}


{
\bibliographystyle{plain}
\bibliography{ccm.bib}

\end{document}